\newcolumntype{x}{>{\centering\arraybackslash}m{3em}}
\newcommand{\lin}{\ell}
\newcommand{\Zn}{\mathbb Z_n}
\DeclareMathOperator{\Ker}{Ker}
\newcommand{\Reg}{\mathcal O}
\newcommand{\hatReg}{\hat{\mathcal O}}
\title{Linear extensions and order-preserving poset partitions}
\author[cor1]{Gejza Jen\v ca}
\ead{gejza.jenca@stuba.sk}
\author{Peter Sarkoci}
\ead{peter.sarkoci@stuba.sk}
\address{
Department of Mathematics and Descriptive Geometry\\
Faculty of Civil Engineering\\
Slovak Technical University\\
Radlinsk\' eho 11\\
	Bratislava 813 68\\
	Slovak Republic
}
\newtheorem{theorem}{Theorem}[section]
\newtheorem{corollary}[theorem]{Corollary}
\newtheorem{proposition}[theorem]{Proposition}
\newtheorem{lemma}[theorem]{Lemma}
\theoremstyle{definition}
\newtheorem{definition}[theorem]{Definition}
\newtheorem{example}[theorem]{Example}
\begin{document}
\begin{abstract}
We examine the lattice of all order congruences
of a finite poset from the viewpoint of combinatorial algebraic topology.
We prove that the order complex of the lattice of all nontrivial
order congruences (or order-preserving partitions) of a finite $n$-element
poset $P$ with $n\geq 3$ is homotopy equivalent to a wedge of spheres of dimension $n-3$. 
If $P$ is connected, then the number of spheres is equal to the number of
linear extensions of $P$. In general, the number of spheres is equal to the
number of cyclic classes of linear extensions of $P$. 
\end{abstract}
\begin{keyword}
poset \sep linear extension \sep order complex \sep homology
\MSC[2010]{Primary 06A07, Secondary 37F20}
\end{keyword}
\maketitle
\section{Introduction}

An order congruence of a poset $P$ can be defined as a kernel
of an order-preserving map with domain $P$. Even if this notion is
simple and natural, the amount of papers dealing with it appears to be relatively
small. The notion appears in the seventies in a series of papers
by T. Sturm \cite{Stu:VvKIA,Stu:ECvK,Stu:OLoKoIM}, the same notion with a different
formulation appears in the W.T. Trotter's book \cite{Tro:CaPOSDT}.
A related notion in the area of ordered algebras appeared in
two papers by G. Cz\'edli a A. Lenkehegyi \cite{CzeLen:OCnDoOA,CzeLen:OCoOAaQD}.
In our approach, we will follow a recent paper by 
P. K\"ortesi, S. Radeleczki and S. Szil\'agyi \cite{KorRadSzi:CaIMoPOS}.

In the present paper we will examine the lattice of all order congruences
of a finite poset from the viewpoint of combinatorial algebraic topology.
We will prove that the order complex of the lattice of all nontrivial
order congruences (or order-preserving partitions) of a finite $n$-element
poset $P$ with $n\geq 3$ is homotopy equivalent to a wedge of spheres of dimension $n-3$. 

If $P$ is connected, then the number of spheres is equal to the number of
linear extensions of $P$. In general, the number of spheres is equal to the
number of cyclic classes of linear extensions (Definition \ref{def:ce}) of $P$. 

\section{Preliminaries}

We assume familiarity with standard notions of algebraic topology and
poset theory. See, 
for example, the books \cite{Mun:EoAT} or \cite{May:CCiAT} for topological terminology
and Chapter 3 of \cite{Sta:EC} or \cite{Gra:GLT} for poset terminology. See also
\cite{Bjo:TM} and \cite{Koz:CAT} for a survey of topological methods in combinatorics.

If a poset has top element, this is denoted by $\hat 1$.
The bottom element of a poset is denoted by $\hat 0$.

Let $P$ be a finite poset with $n$ elements. 
A {\em linear extension} of $P$ is an order-preserving bijection $f\colon P\to\{0,\dots,n-1\}$, where
the codomain is ordered in the usual way. For our purposes, this definition is more
appropriate than the standard one (see \ref{Sta:EC}, Section 3.5). 
The set of all linear extensions is denoted
by $\ell(P)$. The number of linear extensions of $P$ is denoted by $e(P)$.

Alternatively, we may view linear extensions as particular words over the alphabet $P$.
If $f$ is a linear extension on an $n$-element poset $P$, we sometimes identify $f$ with the word
$f^{-1}(0)f^{-1}(1)\dots f^{-1}(n-1)$.

For elements $x,y$ of a poset, we say that $x$ covers $y$ if
$x\geq y$, $x\neq y$ and for every element $z$ such that $x\geq z\geq y$
we have either $z=x$ or $z=y$.
The covering relation is denoted by $\succ$, $\prec$ denotes the inverse of $\succ$.

For a finite poset $(P,\leq)$, we write $\Delta(P)$ for the abstract
simplicial complex consisting of all chains of $P$, including the empty set.
If a finite poset $P$ has an upper or lower bound, then
$\Delta(P)$ is topologically trivial, that means, it is 
homotopy equivalent to a point. Thus, when
dealing with posets from the point of view of algebraic topology, 
it is usual (and useful) to remove bounds from a poset before applying
$\Delta$. If $P$ is a poset, then $\hat P$ denotes the same poset
minus upper or lower bounds, if it has any.

The {\em face poset} of a finite abstract simplicial complex $\Delta$ is
the poset of all faces of $\Delta$, ordered by inclusion. It is 
denoted by $\mathcal F(\Delta)$. 

\begin{definition}
Let $P$ be a finite poset. An {\em acyclic matching} on $P$ is a set 
$M\subseteq P\times P$
such that the following conditions are satisfied.
\begin{enumerate}
\item For all $(a,b)\in M$, $a\succ b$.
\item Each $a\in P$ occurs in at most one element in $M$;
if $(a,b)\in M$ we write $a=u(b)$ and $b=d(a)$.
\item There does not exist a cycle 
$$
b_1\succ d(b_1)\prec b_2\succ d(b_2)\prec\dots\prec b_n\succ d(b_n)\prec b_1.
$$
\end{enumerate}
\end{definition}

When constructing acyclic matchings for posets, the following theorem is sometimes used to
make the induction step.

\begin{theorem}(\cite[Theorem~11.10]{Koz:CAT})\label{thm:acyclicfibers} 
Let $P$ be a finite poset.
Let $\varphi\colon P\to Q$ be an order-preserving or an order-inverting mapping 
and assume that we have acyclic matchings on subposets
$\varphi^{-1}(q)$, for all $q\in Q$. Then the union of these acyclic matchings is itself an
acyclic matching on $P$.
\end{theorem}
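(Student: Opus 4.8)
The plan is to check the three defining conditions of an acyclic matching for the union $M=\bigcup_{q\in Q}M_q$, where $M_q$ denotes the given acyclic matching on the fiber $\varphi^{-1}(q)$. Conditions (1) and (2) are straightforward; the acyclicity condition (3) is where the work lies, so I would treat it last.

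First I would verify that a covering pair in a fiber remains a covering pair in $P$: if $a\succ b$ holds in $\varphi^{-1}(q)$ but some $z\in P$ satisfies $a>z>b$, then applying $\varphi$ and using that it is order-preserving (respectively order-inverting) squeezes $\varphi(z)$ between two copies of $q$, forcing $\varphi(z)=q$ and hence $z\in\varphi^{-1}(q)$, contradicting $a\succ b$ in the fiber. This establishes (1). Condition (2) is immediate because the fibers partition $P$, so each element lies in exactly one fiber and is therefore matched, if at all, by a single $M_q$.

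The hard part will be condition (3). I would argue by contradiction: suppose $M$ admits a cycle
$$
b_1\succ d(b_1)\prec b_2\succ d(b_2)\prec\dots\prec b_n\succ d(b_n)\prec b_1.
$$
Each matched pair $(b_i,d(b_i))$ lies in one fiber, so $\varphi(b_i)=\varphi(d(b_i))$, while each up-step $d(b_i)\prec b_{i+1}$ (indices taken modulo $n$) gives $\varphi(d(b_i))\leq\varphi(b_{i+1})$ in the order-preserving case. Concatenating these relations around the cycle produces
$$
\varphi(b_1)\leq\varphi(b_2)\leq\dots\leq\varphi(b_n)\leq\varphi(b_1),
$$
which forces equality throughout; the order-inverting case yields the reversed chain and the same conclusion. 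Hence every element occurring in the cycle maps to a common value $q$, so the whole cycle lives inside the single fiber $\varphi^{-1}(q)$. Because a covering in $P$ between two fiber elements is also a covering within the fiber, this is a genuine cycle of $M_q$, contradicting the acyclicity of $M_q$. I expect the only subtlety to be this last bookkeeping point---checking that both the down-steps and the up-steps of the cycle are coverings inside the fiber---and it is handled by the same squeezing argument used for condition (1). With no cycle possible, $M$ is acyclic and the proof is complete.
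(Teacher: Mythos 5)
Your proof is correct. Note, however, that the paper does not prove this statement at all: it is quoted verbatim from Kozlov (Theorem~11.10, the ``Patchwork Theorem'') and used as a black box, so there is no in-paper argument to compare against. Your argument is essentially the standard proof of that theorem: the two nontrivial points --- that a covering pair inside a fiber remains a covering pair in $P$ (your squeezing argument via antisymmetry of the order on $Q$), and that any alleged cycle has weakly monotone $\varphi$-values around it and hence collapses into a single fiber, where it contradicts the acyclicity of that fiber's matching --- are exactly the ones that need checking, and you handle both. The only remark worth adding is that for the up-steps of the cycle the needed implication goes the easy way (a covering in $P$ between two elements of a fiber is automatically a covering in the fiber), so no squeezing is required there; the squeezing is only needed for condition (1).
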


In the context of Theorem \ref{thm:acyclicfibers}, the sets $\varphi^{-1}(q)$
are called the {\em fibers of $\varphi$}.

The following theorem shall be used in the proof of the main result.

\begin{theorem}(\cite[Theorem~6.3]{For:AUGtDMT})
\label{DMT}
Let $\Delta$ be a finite simplicial complex.
Let $M$ be an acyclic matching of the face poset of $\Delta$ such that
all faces of $\Delta$ are matched by $M$ except for $n$ unmatched faces of dimension $d$. 
Then $\Delta$ has
the homotopy type of the wedge of $n$ spheres of dimension $d$.
\end{theorem}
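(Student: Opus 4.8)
The plan is to realize the stated theorem as an instance of the fundamental mechanism of discrete Morse theory: an acyclic matching on $\mathcal F(\Delta)$ encodes a sequence of elementary collapses that reduces $\Delta$, up to homotopy, to a CW complex carrying exactly one cell for each unmatched face. First I would use the acyclicity axiom (3) in the definition of an acyclic matching to linearly order the faces of $\Delta$. Consider the directed graph on $\mathcal F(\Delta)$ whose arcs run downward along covers $\tau \succ \sigma$, except that each matched pair $(u(\sigma),\sigma)$ is reversed to run upward; a directed cycle in this graph is exactly a forbidden cycle $b_1 \succ d(b_1) \prec b_2 \succ \dots \prec b_1$, so axiom (3) says the graph is acyclic. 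Any linear extension of it prescribes the order in which cells are to be removed, and from this order one reads off a discrete Morse function $f$ on $\mathcal F(\Delta)$ whose critical faces — those lying in no matched pair — are precisely the $n$ faces of dimension $d$.

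Next I would establish the two deformation steps that let one read the homotopy type off the matching. Writing $\Delta_c = \{\sigma \in \Delta : f(\sigma) \le c\}$ for the sublevel subcomplexes, the first step is that when $c$ crosses a value carried by a matched pair $(u(\sigma),\sigma)$, the face $\sigma$ is free in the current complex — it is contained in no cell other than $u(\sigma)$, precisely because the order was chosen compatibly with the matching — so removing the pair is an elementary collapse and $\Delta_c$ keeps its homotopy type. The second step is that when $c$ crosses the value of a critical face of dimension $p$, the complex $\Delta_c$ is obtained from its predecessor by attaching a single $p$-cell along its boundary sphere. Iterating these two steps across the finitely many thresholds shows that $\Delta$ is homotopy equivalent to a CW complex assembled from exactly the critical cells.

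I would then specialize to the hypothesis that all critical cells share the single dimension $d$. The empty face is matched, and no face of dimension strictly below $d$ is critical, so after performing all the collapses of the first step the entire part of $\Delta$ below dimension $d$ retracts onto a single vertex $v_0$, the partner of the empty face, which is never collapsed and survives as the sole $0$-cell and basepoint. The $n$ critical $d$-cells are then attached to this point; since their attaching maps are maps $\mathbb S^{d-1} \to \{v_0\}$, hence constant, each attachment glues on a $d$-sphere at $v_0$. Performing all $n$ attachments exhibits $\Delta$ as the wedge $\bigvee_{i=1}^{n}\mathbb S^{d}$, as claimed.

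The main obstacle is the rigorous justification of the two deformation steps, and in particular the bookkeeping that guarantees, at the moment a matched pair is processed, that its lower face is genuinely free; this is exactly where axiom (3) does its work, ruling out the closed gradient paths that would obstruct a consistent collapsing order. A secondary but essential subtlety is the treatment of the empty face: one must check that matching it to a vertex is what pins down a single basepoint, so that the low-dimensional skeleton collapses to a point rather than vanishing, which is what makes the final object a genuine wedge of spheres rather than merely a space with the correct homology.
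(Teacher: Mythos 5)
The paper does not actually prove this statement: it is quoted (up to the convention that the empty face may be matched) from Forman's user's guide, Theorem 6.3, so there is no in-paper proof to compare against. Your sketch reproduces the standard discrete Morse theory argument that Forman himself uses: convert the acyclic matching into a discrete Morse function via a linear extension of the modified Hasse diagram, show that sublevel complexes change only by elementary collapses except when a critical cell is crossed, and conclude that $\Delta$ is homotopy equivalent to a CW complex with one cell for each unmatched face. This is the right route, and your handling of the empty face (its partner vertex becoming the unique $0$-cell and basepoint) is exactly the point behind the paper's remark that its wording differs slightly from Forman's original. Two caveats. First, essentially all of the mathematical content lies in the two deformation steps you explicitly defer; as written, your argument is a correct outline rather than a proof, which is acceptable here only because the result is a cited classical theorem. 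Second, the claim that ``after performing all the collapses of the first step the entire part of $\Delta$ below dimension $d$ retracts onto a single vertex'' is not literally what happens: the collapses are interleaved across dimensions (a matched pair may consist of a $(d+1)$-face and a $d$-face), so one cannot collapse the low skeleton first. The clean statement is that the resulting Morse CW complex has a single $0$-cell and $n$ cells of dimension $d$; its $(d-1)$-skeleton is therefore a point, the attaching maps of the $d$-cells are constant, and the complex is $\bigvee_{i=1}^{n}\mathbb S^{d}$. Your conclusion is correct; only that intermediate picture needs adjusting.
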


We remark that our wording of Theorem \ref{DMT} is slightly different than the original one,
since we allow the empty face of $\Delta$ to be matched.

\section{Order-preserving partitions}

The goal of the notion of an order preserving partition is to introduce
a notion of ``congruence'' on a poset. Obviously, we want this notion to behave
nicely, that means, we want the quotient of a poset to be
again a poset. Before we give
a formal definition, let us write informally one possible line of thoughts
leading to the (probably only possible) definition.

Let $(P,\leq)$ be a poset, let $\rho$ be an equivalence relation on $P$.
We write $P/\rho$ for the set of equivalence classes of $\rho$ and
$\rho[x]$ for the equivalence class of $x\in P$. A natural way of transferring
the relation $\leq$ from $P$ to $P/\rho$ is to write, for equivalence
classes $B_1,B_2$ of $\rho$, $B_1\leq B_2$ if and only if there are
$x\in B_1$, $y\in B_2$ such that $x\leq y$. 

In general, this relation on $P/\rho$ is not transitive. There is an easy remedy for 
this problem: we can take a transitive closure of the previous version of
$\leq$ and denote this new relation $\leq$, again. This leads us to the notion
of $\rho$-sequence (see Definition \ref{def:opp} below). In general, even this relation
$\leq$ on $P/\rho$ is not a partial order, since it need not to be
antisymmetric. It turns out that this problem is caused by $\rho$-cycles (see below)
that are not contained in an equivalence class of $\rho$. The only solution seems
to be to assume that there are no such problematic $\rho$-cycles; and this
is exactly the notion of {\em order preserving partition} given by the
following definition.

\begin{definition}[\cite{KorRadSzi:CaIMoPOS}]\label{def:opp}
Let $(P,\leq)$ be a poset and let $\rho\subseteq P^2$ be an equivalence relation
on it. 
\begin{enumerate}
\item[(i)] A sequence $x_0,\dots,x_n\in P$ is called a {\em $\rho$-sequence} if for
each $i\in\{1,\dots,n\}$ either $(x_{i-1},x_i)\in\rho$ or $x_{i-1}<x_i$ holds.
If in addition $x_0=x_n$, then $x_0,\dots,x_n$ is called a {\em $\rho$-cycle}.
\item[(ii)] We say that $\rho$ is an {\em order-congruence} of $(P,\leq)$ if for every
$\rho$-cycle $x_0,\dots,x_n\in P$, $\rho[x_0]=\dots=\rho[x_n]$ is satisfied.
\item[(iii)] A partition $\pi$ is called an {\em order-preserving partition} of
$(P,\leq)$ if $\pi=(P/\rho)$ for some order congruence $\rho$ of $(P,\leq)$.
We write $\pi=\pi_\rho$ or $\rho=\rho_\pi$.
\item[(iv)] If $\pi$ is an order-preserving partition we say that
a sequence $x_0,\dots,x_n$ is a {\em $\pi$-sequence} or a {\em $\pi$-cycle} if
$x_0,\dots,x_n$ is a $\rho_\pi$-sequence or a $\rho_\pi$-cycle, respectively.
\end{enumerate}
\end{definition}

\begin{lemma}[\cite{KorRadSzi:CaIMoPOS}]
If $\rho$ is an order-congruence of a poset $(P,\leq)$, then it induces a partial
order $\leq_\rho$ defined on the set $P/\rho$ as follows:

$\rho[x]\leq_\rho\rho[y]$ if there exists a $\rho$-sequence $x_0,\dots,x_n\in P$, with
$x_0=x$ and $x_n=y$.
\end{lemma}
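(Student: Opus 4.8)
The plan is to check that $\leq_\rho$ is well defined on the quotient set $P/\rho$ and then that it is reflexive, transitive, and antisymmetric. Two elementary closure properties of $\rho$-sequences will do most of the work. First, since $\rho$ is reflexive the one-term sequence $x_0=x$ (the case $n=0$) is already a $\rho$-sequence, and any pair of $\rho$-related elements forms a $\rho$-sequence of length one. Second, $\rho$-sequences \emph{compose}: if $x_0,\dots,x_m$ and $y_0,\dots,y_n$ are $\rho$-sequences with $x_m=y_0$, then $x_0,\dots,x_m,y_1,\dots,y_n$ is again a $\rho$-sequence, because the only step that is new, the one from $x_m=y_0$ to $y_1$, already satisfies the required condition.

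For \emph{well-definedness}, assume $x\sim_\rho x'$ and $y\sim_\rho y'$ and that some $\rho$-sequence witnesses $\rho[x]\leq_\rho\rho[y]$. Prepending $x'$ and appending $y'$ produces a $\rho$-sequence from $x'$ to $y'$, since $(x',x)\in\rho$ and $(y,y')\in\rho$; thus $\rho[x']\leq_\rho\rho[y']$ and the definition is independent of the chosen representatives. \emph{Reflexivity} is immediate from the length-zero sequence. \emph{Transitivity} follows from the composition property: a $\rho$-sequence from $x$ to $y$ concatenated with one from $y$ to $z$ is a $\rho$-sequence from $x$ to $z$, so $\rho[x]\leq_\rho\rho[y]$ and $\rho[y]\leq_\rho\rho[z]$ together give $\rho[x]\leq_\rho\rho[z]$.

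The main point, and the only place where the hypothesis that $\rho$ is an order-congruence is used, is \emph{antisymmetry}. Suppose $\rho[x]\leq_\rho\rho[y]$ and $\rho[y]\leq_\rho\rho[x]$, witnessed by $\rho$-sequences $x=x_0,\dots,x_m=y$ and $y=y_0,\dots,y_n=x$. Composing them yields a $\rho$-sequence that starts and ends at $x$, that is, a $\rho$-circle. The defining property of an order-congruence forces all terms of a $\rho$-circle into a single $\rho$-class, so in particular $\rho[x]=\rho[y]$. I expect this to be the essential step: the composition and representative-independence arguments are routine bookkeeping about $\rho$-sequences, whereas antisymmetry is precisely what the $\rho$-circle condition in the definition of an order-congruence was designed to secure.
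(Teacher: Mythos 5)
Your proof is correct and complete: the closure of $\rho$-sequences under concatenation gives reflexivity, transitivity, and well-definedness on the quotient, and antisymmetry is exactly where the $\rho$-circle condition in the definition of an order-congruence is invoked, which is the intended mechanism. The paper itself states this lemma as a citation to K\"ortesi--Radeleczki--Szil\'agyi and gives no proof, so there is nothing internal to compare against, but your argument is the standard one and I see no gaps.
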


In view of the previous lemma, we can consider $\pi_\rho$ as a poset with the partial
order $\leq_\rho$ determined by $\leq$. In what follows, we write simply $\leq$ 
instead of $\leq_\rho$, if there is no danger of confusion.

Recall, that a {\em quasiorder} (sometimes called a {\em preorder}) is 
a binary relation that is reflexive and transitive. For an order-preserving
map of posets $f:P\to Q$ we write
$$
\Ker f:=\{ (x,y)\in P\times P: f(x)=f(y)\}.
$$
The relation $\Ker f$ is called {\em the kernel} of $f$.

\begin{theorem}[\cite{CzeLen:OCnDoOA}]
Let $(P,\leq)$ be a poset and let $\rho$ be an equivalence relation on $P$.
Then the following are equivalent.
\begin{enumerate}
\item[(i)]The relation $\rho$ is an order-congruence of $(P,\leq)$.
\item[(ii)]There exists a poset $(Q,\leq)$ an an order-preserving map $f\colon P\to Q$ such that
$\rho=\Ker f$.
\item[(iii)]The relation $\leq$ can be extended to a quasiorder $\theta$ such that $\rho=\theta\cap\theta^{-1}$.
\end{enumerate}
\end{theorem}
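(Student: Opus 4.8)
The plan is to establish the cycle of implications (ii) $\Rightarrow$ (i) $\Rightarrow$ (iii) $\Rightarrow$ (ii), since each arrow is then a single self-contained argument. For (ii) $\Rightarrow$ (i), I would start from an order-preserving $f\colon P\to Q$ with $\rho=\Ker f$ and take an arbitrary $\rho$-circle $x_0,\dots,x_n$ (so $x_0=x_n$). In every step $i$ one has either $(x_{i-1},x_i)\in\rho$, whence $f(x_{i-1})=f(x_i)$, or $x_{i-1}<x_i$, whence $f(x_{i-1})\leq f(x_i)$; in both cases $f(x_{i-1})\leq f(x_i)$. Chaining these gives $f(x_0)\leq f(x_1)\leq\dots\leq f(x_n)=f(x_0)$, and antisymmetry in $Q$ forces all the values $f(x_i)$ to coincide. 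Hence all $x_i$ lie in one $\rho$-class, which is precisely the order-congruence condition.

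Next, (i) $\Rightarrow$ (iii) is the heart of the matter, and the only place where the order-congruence hypothesis does real work. I would define $\theta$ to be the relation ``there is a $\rho$-sequence from $x$ to $y$.'' Reflexivity and transitivity are immediate (a single constant sequence, respectively concatenation of sequences), so $\theta$ is a quasiorder, and it extends $\leq$ because $x<y$ already forms a two-term $\rho$-sequence. The inclusion $\rho\subseteq\theta\cap\theta^{-1}$ follows from the symmetry of $\rho$: if $(x,y)\in\rho$, then both $x,y$ and $y,x$ are $\rho$-sequences. For the reverse inclusion, suppose $(x,y)\in\theta$ and $(y,x)\in\theta$; concatenating a $\rho$-sequence from $x$ to $y$ with one from $y$ back to $x$ produces a $\rho$-circle through both $x$ and $y$, and the order-congruence property then places $x$ and $y$ in the same $\rho$-class. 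This yields $\rho=\theta\cap\theta^{-1}$.

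Finally, for (iii) $\Rightarrow$ (ii), with $\theta$ a quasiorder extending $\leq$ and $\rho=\theta\cap\theta^{-1}$ its symmetric part, I would form the quotient $Q=P/\rho$ and order it by declaring $\rho[x]\leq\rho[y]$ whenever $(x,y)\in\theta$. Well-definedness of this relation on classes and its antisymmetry are exactly the assertion that $\rho$ is the symmetric part of the quasiorder $\theta$, so $Q$ is a genuine poset. The canonical surjection $f\colon P\to Q$ given by $x\mapsto\rho[x]$ is order-preserving because $\leq\,\subseteq\,\theta$, and $\Ker f=\rho$ by construction.

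I expect the only real obstacle to be the reverse inclusion $\theta\cap\theta^{-1}\subseteq\rho$ in the step (i) $\Rightarrow$ (iii): verifying that the symmetric part of $\theta$ is no larger than $\rho$ is precisely the point at which one must invoke the defining property of an order-congruence, whereas every other verification reduces to a routine check of reflexivity, transitivity, or monotonicity.
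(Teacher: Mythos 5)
Your proof is correct and complete. Note that the paper itself does not prove this statement --- it is quoted from Cz\'edli and Lenkehegyi \cite{CzeLen:OCnDoOA} without proof --- so there is no in-paper argument to compare against; your cycle of implications (ii) $\Rightarrow$ (i) $\Rightarrow$ (iii) $\Rightarrow$ (ii) is the natural and standard route. You correctly identify where the order-congruence hypothesis is actually used (the inclusion $\theta\cap\theta^{-1}\subseteq\rho$, via concatenating two $\rho$-sequences into a $\rho$-circle), and the remaining verifications (antisymmetry in $Q$ forcing all $f(x_i)$ equal in (ii) $\Rightarrow$ (i); well-definedness and antisymmetry of the quotient order in (iii) $\Rightarrow$ (ii)) are carried out or reduced to routine checks exactly as they should be.
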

\begin{figure}
\includegraphics{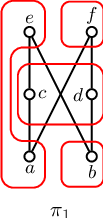}
\hskip 3em
\includegraphics{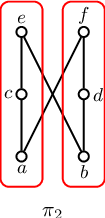}
\hskip 3em
\includegraphics{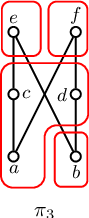}
\hskip 3em
\includegraphics{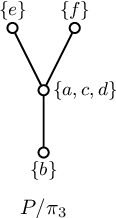}
\caption{The partition $\pi_3$ is order-preserving, whereas $\pi_1,\pi_2$ are not.}
\label{fig:nonreg}
\end{figure}

\begin{example}
Consider a 6-element poset $P$ and its three partitions $\pi_1,\pi_2,\pi_3$
as shown in Figure \ref{fig:nonreg}. 

The partition $\pi_1$ is not order-preserving, since $a,c,e,a$ is a $\pi_1$-cycle with $[a]_{\pi_1}\neq[c]_{\pi_1}$. 
In fact, it is easy to see that every
block of an order-preserving partition must be order-convex.

Although $\pi_2$ has only order-convex blocks, it fails to be order-preserving.
Indeed $a,f,b,e,a$ is a $\pi_2$-cycle with $[a]_{\pi_2}\neq[f]_{\pi_2}$.

Finally, $\pi_3$ is an order-preserving partition, the diagram of the quotient poset $P/\pi_3$ 
is shown in the picture. 
\end{example}

Let us consider the set $\Reg(P)$ of all order-preserving partitions of $P$
equipped with a partial order $\leq$ defined as
the usual refinement order of partitions:
$\pi_1\leq\pi_2$ iff every block of $\pi_1$ is a subset of a block of $\pi_2$.

The bottom element of
$\Reg(P)$ is the partition consisting of singletons,
the top element is the partition with a single
block.

The poset $\Reg(P)$ is an algebraic lattice \cite[Theorem~30]{Stu:OLoKoIM}. 
For order-preserving partitions
$\pi_1$, $\pi_2$
$$
\pi_1\wedge\pi_2=\{B_1\cap B_2: \text{$B_1\in\pi_1$, $B_2\in\pi_2$ and 
	$B_1\cap B_2\neq\emptyset$}\}.
$$

Let $\pi$ be an order-preserving partition of a poset $P$.
For $x,y\in P$, we write $x\lesssim_\pi y$ iff there is a $\rho_\pi$-sequence
from $x$ to $y$. Note that $\lesssim_\pi$ is a quasiorder and
that $\lesssim\_pi$ is simply the transitive closure of the relation 
$\leq\cup\rho_\pi$.

To define joins, we may proceed as follows.
Let $\pi_1,\pi_2\in\Reg(P)$
and $\lesssim$ 
be the transitive closure
of the union of $\lesssim_{\pi_1}$ and $\lesssim_{\pi_2}$.
Clearly, $\lesssim$ is a quasiorder on $P$. For $x,y\in P$,
write $x \sim y$ iff $x\lesssim y$ and $y\lesssim x$.
Then $P/\sim$ is an order-preserving partition of $P$ and
$\pi_1\vee\pi_2=(P/\sim)$

The covering relation in the lattice of order-preserving
partitions of a finite poset is easy to describe: for a pair $\pi_1,\pi_2$ 
of order-preserving partitions of a finite poset $P$ we have 
$\pi_1\prec\pi_2$ iff $\pi_2$ arises from $\pi_1$ by merging
of two blocks $B_1,B_2$ of $\pi_1$ such that 
\begin{itemize}
\item either $B_1\prec B_2$ in the poset $(\pi_1,\leq)$, or
\item $B_1$ and $B_2$ are incomparable in the poset $(\pi_1,\leq)$, in symbols $B_1\parallel B_2$.
\end{itemize}

In particular, this implies that the atoms of the lattice of order-preserving
partitions of a finite poset $P$ is the set of all partitions of $P$ that
are of the form
$$
\pi_{a,b}:=\{\{a,b\}\}\cup\{\{x\}:x\in P-\{a,b\}\},
$$
where $a,b\in P$ is such that either $a\prec b$ or $a\parallel b$.
Moreover, the lattice $\Reg(P)$ is ranked. 
The ranking function is given by $|P|-|\pi|$.

\begin{example}
\label{ex:antichain}
If $A_n$ is an $n$-element antichain, then every partition of $A_n$ is order-preserving.
The lattice of order-preserving partitions is then the partition lattice
of the set $A_n$, usually denoted by $\Pi_n$. It is well known 
\cite{Fol:ThGoaL,Koz:CAT}, that for all $n\ge 3$
the order complex of $\hat\Pi_n$ is homotopic to the wedge of 
$(n-1)!$ spheres of dimension $n-3$.
\end{example}

\begin{example}\label{ex:chain_ordcongr}
If $C_n$ is an $n$-element chain, $n\ge 3$, then a partition $\pi$ of $C_n$
is order-preserving if and only if all blocks of $\pi$ are convex subsets of $C_n$. It
is easy to see that $\Reg(C_n)$ is a Boolean algebra $B_{n-1}$ with $n-1$ atoms.
It is well known that the order complex of $\hat B_{n-1}$ is homotopic to a 
single sphere of dimension $n-3$.
\end{example}
\begin{figure}
\includegraphics{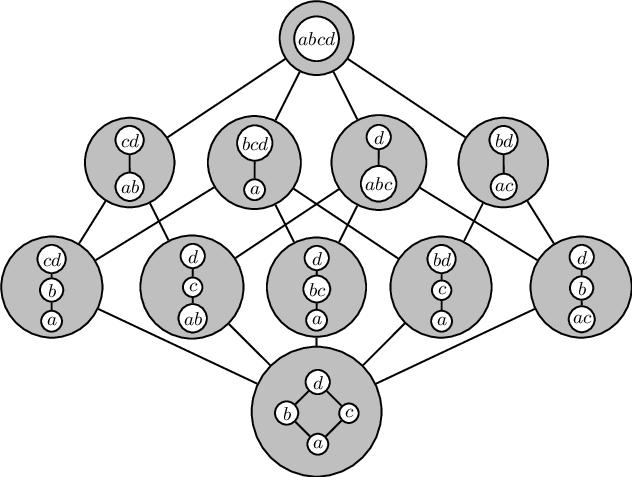}
\caption{Order-preserving partitions of a Boolean algebra with two atoms}
\label{fig:big}
\end{figure}

\begin{example}\label{ex:btwo}
To give a slightly more complicated example, let $B_2$ be a
Boolean algebra with two atoms. The lattice of order-preserving partitions
of $B_2$ has 11 elements; its Hasse diagram is Figure \ref{fig:big}. Note that
$\Delta(\hatReg(B_2))$ is not semimodular. 

It is easy to see that $\Delta(\hatReg(B_2))$ has the homotopy type
of a wedge of two spheres of dimension one.
\end{example}

The proof of the following Theorem is inspired by the proof
of Theorem 11.18 in \cite{Koz:CAT}, where the homotopy type of $\Delta(\hat\Pi_n)$
is determined.

\begin{theorem}
\label{thm:numberofspheres}
Let $P$ be a finite poset with $n$ elements, $n\geq 3$. Then $\Delta(\hatReg(P))$ is
homotopy equivalent to a wedge of spheres of dimension $n-3$. Let $a$ be
a minimal element of $P$. Write $s_O(P)$ for the number of
spheres in $\Delta(\hatReg(P))$. For $n>3$, $s_O(P)$ satisfies the recurrence
$$
s_O(P)=\sum_{\substack{b\in P\\\pi_{a,b}\text{ order-preserving}}}s_O(\pi_{a,b}).
$$
\end{theorem}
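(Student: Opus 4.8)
The plan is to argue by induction on $n$ and to exhibit, on the face poset $\mathcal F:=\mathcal F(\Delta(\hatReg(P)))$, an acyclic matching whose unmatched cells are all of dimension $n-3$ and number $\sum_{\pi_{a,b}\in G}s_O(\pi_{a,b})$, where $G=\{\pi_{a,b}:\pi_{a,b}\text{ is order-preserving}\}$ is the set of atoms through the fixed minimal element $a$ (since $a$ is minimal these are exactly the $\pi_{a,b}$ with $a\prec b$ or $a\parallel b$); the theorem then follows at once from Theorem \ref{DMT}. The base case $n=3$ is immediate: here $\hatReg(P)$ is the antichain of order-preserving atoms, $\Delta(\hatReg(P))$ is a finite set of points, and matching the empty face with one of them leaves the rest as critical $0$-cells. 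For the inductive step I would assemble the matching from local matchings on the fibers of the order-inverting map $\varphi\colon\mathcal F\to Q$ that sends a chain $C$ to $\min C$ when $\min C\in G$ and to an adjoined top element $*$ otherwise (this is order-inverting because enlarging a chain can only lower its minimum, and the elements of $G$ are minimal in $\hatReg(P)$), and then invoke Theorem \ref{thm:acyclicfibers}.

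The key structural input is the correspondence theorem for order congruences: for each $\pi_{a,b}\in G$ the principal filter $[\pi_{a,b},\hat 1]$ of $\Reg(P)$ is isomorphic to $\Reg(\pi_{a,b})$, the lattice of order-preserving partitions of the $(n-1)$-element quotient poset $\pi_{a,b}=P/\rho_{\pi_{a,b}}$, with $\pi_{a,b}\mapsto\hat 0$ and $\hat 1\mapsto\hat 1$. Consequently the fiber $\varphi^{-1}(\pi_{a,b})$, consisting of the chains whose least element is $\pi_{a,b}$, is isomorphic as a poset to $\mathcal F(\Delta(\hatReg(\pi_{a,b})))$ via $C\mapsto C\setminus\{\pi_{a,b}\}$. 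By the induction hypothesis $\Delta(\hatReg(\pi_{a,b}))$ carries an acyclic matching with exactly $s_O(\pi_{a,b})$ critical cells, all of dimension $(n-1)-3=n-4$; transporting it along this isomorphism (i.e.\ prepending $\pi_{a,b}$) yields an acyclic matching on the fiber whose critical cells are the chains $\{\pi_{a,b}\}\cup C'$, now of dimension $n-3$. Summing over $G$ accounts for $\sum_{\pi_{a,b}\in G}s_O(\pi_{a,b})$ critical cells. Note that the empty chain of $\hatReg(\pi_{a,b})$, corresponding to the singleton chain $\{\pi_{a,b}\}$, is matched rather than critical, since for $n\ge 4$ its dimension $-1$ differs from $n-4$.

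It remains to match the single large fiber $\varphi^{-1}(*)$ \emph{perfectly}, i.e.\ to show that the complex $\Delta(\hatReg(P)\setminus G)$ is collapsible; here I use that an atom through $a$, being minimal, can occur in a chain only as its least element, so $\varphi^{-1}(*)$ is exactly the face poset of the chains avoiding $G$. This is the main obstacle. My plan is an element/closure matching driven by one fixed atom $\alpha=\pi_{a,b_0}\in G$: on the chains of $\hatReg(P)\setminus G$ one toggles the join with $\alpha$, a move that formally comes from the closure operator $\pi\mapsto\pi\vee\alpha$ whose fixed points form the filter $[\alpha,\hat 1)$, a cone with apex $\alpha$ and hence contractible. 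The delicate point — and the crux of the whole argument — is that this closure is only partially defined on $\hatReg(P)\setminus G$, because $\pi\vee\alpha$ may equal $\hat 1$; the elements with $\pi\vee\alpha=\hat 1$ must be paired off by a separate, compatible rule keeping the fiber matching acyclic and leaving nothing critical. Once $\varphi^{-1}(*)$ is shown collapsible, Theorem \ref{thm:acyclicfibers} combines the fiber matchings into a global acyclic matching on $\mathcal F$ whose critical cells are precisely the $\{\pi_{a,b}\}\cup C'$ above, all of dimension $n-3$; Theorem \ref{DMT} then yields the homotopy equivalence with a wedge of $\sum_{\pi_{a,b}\in G}s_O(\pi_{a,b})$ spheres of dimension $n-3$, establishing both assertions simultaneously.
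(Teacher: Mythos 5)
Your overall architecture --- induction on $n$, an order-inverting map to which Theorem~\ref{thm:acyclicfibers} is applied, identification of the fibers over the atoms $\pi_{a,b}$ with $\mathcal F(\Delta(\hatReg(\pi_{a,b})))$ by deleting $\pi_{a,b}$ from each chain, and then Theorem~\ref{DMT} --- is the same as the paper's, and that part of your argument is sound, including the bookkeeping of the empty face and of the dimensions. The gap is exactly where you locate it yourself: you never produce a perfect acyclic matching on the large fiber $\varphi^{-1}(*)$, i.e.\ on the chains of $\hatReg(P)$ that avoid every atom through $a$. Your proposed repair, toggling the join with one fixed atom $\alpha=\pi_{a,b_0}$, fails as stated: $\pi\mapsto\pi\vee\alpha$ is not a closure operator on $\hatReg(P)\setminus G$ because $\pi\vee\alpha$ can equal $\hat 1$ (already for the coatom $\pi_a=\{\{a\},P\setminus\{a\}\}$ one has $\pi_a\vee\alpha=\hat 1$), and even where the closure is defined it yields a homotopy equivalence onto its image rather than, automatically, a perfect acyclic matching on the face poset. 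The ``separate, compatible rule'' that is supposed to pair off the exceptional elements is never given, and producing it is the actual content of the proof; as written the argument is incomplete at its crux.

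The paper closes precisely this gap by refining the fibration instead of working with one big fiber. Let $P_a$ be the poset of order-preserving partitions having $\{a\}$ as a singleton block, with top element $\pi_a$, and map a chain $c$ to $\pi_{\min}\wedge\pi_a$, where $\pi_{\min}$ is the least element of $c$ outside $P_a$ (and to $\pi_a$ if $c$ lies entirely in $P_a$). The fiber over $\hat 0$ is exactly your union of fibers over the atoms $\pi_{a,b}$, handled by induction as you do; every other fiber, lying over some $\pi\in P_a$ with $\pi\neq\hat 0$, is perfectly matched simply by toggling the single element $\pi$ in and out of the chain. The point that makes this toggle legitimate is that $\pi=\pi_{\min}\wedge\pi_a$ is automatically comparable with every member of $c$ (elements of $c$ below $\pi_{\min}$ lie in $P_a$, hence below both $\pi_a$ and $\pi_{\min}$, hence below $\pi$; the rest lie above $\pi_{\min}\geq\pi$), and adding or removing $\pi$ does not change the value of the fiber map. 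To complete your proof you should either adopt this finer decomposition of $\varphi^{-1}(*)$, or supply an explicit perfect acyclic matching on the chains avoiding $G$, which your sketch does not do.
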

\begin{proof}
Using induction with respect to $n$, we shall prove that for every
finite poset $P$ with at least three elements, there exists an
acyclic matching on the face poset of $\Delta(\hatReg(P))$ such that all faces
are matched except for $s_O(P)$ faces of dimension $n-3$. By Theorem \ref{DMT},
this implies that $\Delta(\hatReg(P))$ is homotopy equivalent to a wedge of
$s_O(P)$ spheres of dimension $n-3$. The recurrence for $s_O(P)$ will be proved as
a byproduct of the induction step.

For every 3-element poset $P$, $\hatReg(P)$ is an antichain and it is obvious that there is a
matching as claimed.

Let $n>3$ and suppose that for any poset $Q$ with $|Q|=n-1$ there is an acyclic matching
on the face poset of $\Delta(\hatReg(Q))$ such that all faces
are matched except for $s_O(Q)$ faces of dimension $|Q|-3=n-4$.

Fix a minimal element $a$ of $P$. Let $P_a$ be the poset of
all order-preserving partitions containing $\{a\}$ as a singleton class, ordered by refinement.
Let $\pi_a=\{\{a\},P\setminus\{a\}\}$; it is clear that $\pi_a$ is an order-preserving partition of $P$
and that it is the top element of $P_a$.
Let $\phi\colon \mathcal F(\Delta(\hatReg(P)))\to P_a$ be given by the following rules:
\begin{itemize}
    \item if $c$ is a chain consisting solely of elements of $P_a$, then $\phi(c)=\pi_a$,
    \item otherwise let $\pi_{min}$ be the smallest element of $c$ such that $\pi_{min}\notin P_a$;
put $\phi(c)=\pi_{min}\wedge\pi_a$.
\end{itemize}
It is obvious that $\phi$ is an order-inverting mapping.
We shall construct acyclic matchings on the fibers of $\phi$. By Theorem \ref{thm:acyclicfibers},
the union of these matchings is an acyclic matching on 
$\mathcal F(\Delta(\hatReg(P)))$.

Let $S=\phi^{-1}(\pi)$ where $\pi$ is not the bottom element of $P_a$,
that means, the partition of $P$ into singletons.
Then we can construct the matching on $S$ by either removing or adding
$\pi$ from each chain, depending on whether it does or does
not contain $\pi$. 

Let $S=\phi^{-1}(\hat 0)$, where $\hat 0$ is the partition of $P$ into
singletons. This means, that for every chain $c\in S$ the smallest element $\pi_{min}$ 
of $c$ not belonging to $P_a$ must be such that $\pi_{min}\wedge\pi_a=\hat 0$.
This implies that $\pi_{min}$ has a single non-singleton class, in
other words, $\pi_{min}=\pi_{a,b}$ for some $b$.
Moreover, whenever $c\in\mathcal F(\Delta(\hatReg(P)))$ is such that
$\pi_{a,b}\in c$, then $c\in S$. Thus $S$ is the set of all 
$c\in\Delta(\hatReg(P))$ such that $\pi_{a,b}\in c$. Let us write
$$
S_{a,b}=\{c\in\mathcal F(\Delta(\hatReg(P))):\pi_{a,b}\in c\}.
$$
Note that $S$ is the disjoint union of all these $S_{a,b}$.
Moreover, there is an easy-to-see bijection between the 
elements of $S_{a,b}$ and the elements of 
$\mathcal F(\Delta(\hatReg(\pi_{a,b})))$. Indeed,
observe that each of the $c\in S_{a,b}$ can be constructed
from a simplex in $\mathcal F(\Delta(\hatReg(\pi_{a,b})))$ by adding $\pi_{a,b}$.
Thus, we may apply induction hypothesis: for every $\pi_{a,b}$, 
there is an acyclic matching on
$\mathcal F(\Delta(\hatReg(\pi_{a,b})))$ with $s_O(\pi_{a,b})$ critical
simplices of dimension $n-4$.
In an obvious way, we may extend this acyclic matching to an
acyclic matching on $S_{a,b}$, leaving $s_O(\pi_{a,b})$ critical 
simplices of dimension $n-3$. 
This proves the recurrence stated in the Theorem.
\end{proof}

The recurrence in Theorem \ref{thm:numberofspheres} allows us to compute
the number of spheres in $\Delta(\hatReg(P))$ for any relevant finite 
poset $P$.
For a small poset $P$, this can be easily done by hand.
Playing with small examples yields a hypothesis that $s_O(P)=e(P)$;
the number of spheres is equal to the number of linear extensions
of $P$. However,
this is clearly not true, because for every $n$-element antichain $A_n$ one has
$s_O(A_n)=(n-1)!$ (Example \ref{ex:antichain}) while $e(A_n)=n!$. On the other
hand, it is possible to prove directly that things go well for a connected
poset: whenever $P$ is connected, $s_O(P)=e(P)$. This will be proved as
a corollary of the main result (Corollary \ref{coro:last}).

\section{Cyclic classes of linear extensions}

In this section, we prove that the number $s_O(P)$ is equal to the
number of cyclic classes of linear extensions of $P$.

Let $P$ be a finite nonempty poset with $n$ elements.
Let $f\colon P\to [0,n-1]_{\mathbb N}$ be a linear extension of $P$.
Consider the natural right action $(u,k)\mapsto u\oplus k$ 
of the cyclic group of order $n$
$(\Zn,\oplus)$ on itself. We write 
$\oplus_f\colon P\times\Zn\to P$ for the pullback
of this action by $f$. In other words,  
for all $x\in P$ and $k\in\Zn$,
$$
x\oplus_f k=f^{-1}(f(x)\oplus k).
$$
Analogously,
for $k\in\Zn$, we write $x\ominus_f k:=x\oplus_f(n-k)$.

Formally speaking, $\Zn$ plays a double role here: 
\begin{enumerate}
\item $\Zn$ is a set of natural numbers $[0,n-1]_{\mathbb N}$ being acted on.
Thus, among other things, $\Zn$ is a subposet of the poset $(\mathbb N,\leq)$.
\item $\Zn$ is a group that acts on a set of natural numbers $\Zn$
\end{enumerate}
This should not lead to any confusion.

Obviously, the $\oplus_f$ action of the element $1\in\Zn$ can be
represented by an oriented cycle digraph. The vertices of the digraph
are the elements of $P$, the edges are
\begin{multline*}
\{(x,x\oplus_f 1): x\in P\}\\
=\{(f^{-1}(0),f^{-1}(1)),\dots,(f^{-1}(n-2),f^{-1}(n-1)),
(f^{-1}(n-1),f^{-1}(0))\}.
\end{multline*}
We denote this digraph by $C(f,P)$.
As $\Zn$ is cyclic, the action of $1$, and thus the digraph,
determines the action of $\Zn$ on the set $P$.

\begin{definition}\label{def:ce}
Let $P$ be a finite poset, let $f,g$ be linear extensions of
$P$. 
We say that $f,g$ are {\em cyclically equivalent}, in symbols
$f\sim g$, if $\oplus_f=\oplus_g$. An equivalence class of
$\sim$ is called a {\em a cyclic class (of linear extensions) of $P$}. The number of cyclic classes
of $P$ is denoted by $e_C(P)$.
\end{definition}

Note that $f\sim g$ if and only if there are words $w,w'$ over $P$ such that
$f=ww'$ and $g=w'w$.

\begin{example}
\label{ex:discon}
Consider the disjoint sum of a chain of height $1$ and a 
one-element poset (Figure \ref{fig:discon}). This poset has
3 linear extensions giving rise to 2 cyclic classes.
\begin{figure}[t]
\includegraphics{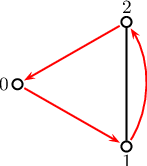}
\hskip 2em
\includegraphics{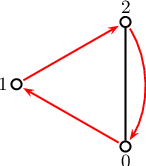}
\hskip 2em 
\includegraphics{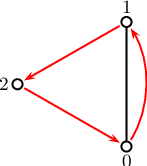}
\caption{Actions of $\mathbb Z_3$ on a 3-element poset}
\label{fig:discon}
\end{figure}
\end{example}
As we can see from the Example \ref{ex:discon}, it may well
happen that two distinct linear extensions of a finite poset determine the
same action. In this case, 
the $\sim$ relation is nontrivial and the number of cyclic classes is smaller than
the number of linear extensions, $e_C(P)<e(P)$.

\begin{proposition}
Let $P$ be a finite poset.
The following are equivalent.
\begin{itemize}
\item[(a)] $P$ is connected.
\item[(b)] For all linear extensions $f,g$ of $P$, $f\sim g$ implies that $f=g$.
\end{itemize}
\end{proposition}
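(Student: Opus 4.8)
The plan is to prove the two implications separately, leaning on Propositions~\ref{prop:kshift} and~\ref{prop:compshift}, which together reduce the statement to a combinatorial fact about where the ``wrap-around'' threshold $n-k$ is allowed to fall.

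For (a)$\implies$(b), I would suppose $P$ is connected and $\oplus_f=\oplus_g$. Proposition~\ref{prop:kshift} then supplies a $k\in\Zn$ with $f(x)=g(x)\oplus k$ for all $x\in P$, so it suffices to force $k=0$. Since $f$ is itself a linear extension, the implication (c)$\implies$(a) of Proposition~\ref{prop:compshift} applies, giving: within every connected component the predicate $g(\cdot)+k\geq n$ takes a constant truth value. As $P$ is connected, this predicate is therefore constant on all of $P$. I would test it on the two extremal elements $g^{-1}(0)$ and $g^{-1}(n-1)$: if $k\neq 0$ then $g^{-1}(0)$ yields $k<n$ while $g^{-1}(n-1)$ yields $n-1+k\geq n$, contradicting constancy. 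Hence $k=0$, i.e.\ $f=g$.

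For (b)$\implies$(a) I would argue the contrapositive and exhibit an explicit witness. Assuming $P$ is disconnected, write its components as $Q_1,\dots,Q_m$ with $m\geq 2$, and build a linear extension $g$ that assigns the lowest $|Q_1|$ values to $Q_1$ and the remaining values to $Q_2\cup\dots\cup Q_m$; this is a legitimate linear extension precisely because there are no comparabilities across components. Setting $k=n-|Q_1|$, so that $1\leq k\leq n-1$, the threshold $n-k=|Q_1|$ separates the value set of $Q_1$ from that of the other components without splitting any single component. Thus condition (a) (equivalently (b)) of Proposition~\ref{prop:compshift} holds, so $f(x):=g(x)\oplus k$ is a linear extension. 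Proposition~\ref{prop:kshift} then gives $\oplus_f=\oplus_g$, while evaluating at $g^{-1}(0)$ yields $f(g^{-1}(0))=k\neq 0=g(g^{-1}(0))$, so $f\neq g$; this falsifies (b).

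I expect the forward direction to be routine once Proposition~\ref{prop:compshift} is invoked, with the only real subtlety lying in the backward direction. For an arbitrary linear extension the shifted map $g\oplus k$ need \emph{not} be order-preserving, since a generic threshold can split a component whose $g$-values are interleaved with those of another component. The crux is therefore the freedom to choose $g$ so that the components occupy contiguous blocks of values; after that choice, Proposition~\ref{prop:compshift} does all the work.
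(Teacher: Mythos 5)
Your proposal is correct and follows essentially the same route as the paper: both directions reduce to Propositions~\ref{prop:kshift} and~\ref{prop:compshift}, with the forward direction forcing $k=0$ by testing the threshold predicate at $g^{-1}(0)$ and $g^{-1}(n-1)$, and the backward direction building a linear extension whose components occupy contiguous value blocks and cyclically shifting by a whole component's worth (you shift the first component to the end, the paper shifts the last one to the front --- a cosmetic difference). Your explicit appeal to Proposition~\ref{prop:compshift} to verify that the shifted map is again a linear extension is a slightly more careful rendering of a step the paper leaves implicit.
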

\begin{proof}
(a)$\implies$(b): Let $f,g$ be linear extensions of $P$ such that 
$\oplus_f=\oplus_g$ and $f\neq g$. There are nonempty words $w_1,w_2$ over $P$
such that $f=w_1w_2$ and $g=w_2w_1$. Since $P$ is connected, then there are comparable elements
$x_1,x_2$ occurring in words $w_1,w_2$, respectively. 
However, it is clear that $f(x_1)<f(x_2)$ and
$g(x_1)>g(x_2)$, which imply $x_1<x_2$ and $x_1>x_2$, a contradiction.

(b)$\implies$(a):
If $P$ is not connected, then $P$ is a disjoint union of nonempty $P_1,P_2$ such that every pair
$x_1\in P_1$ and $x_2\in P_2$ is incomparable. Clearly, if $w_1$ is a linear extension
of $P_1$ and $w_2$ is a linear extension of $P_2$, then both $w_1w_2$ and $w_2w_1$ are 
cyclically equivalent linear extensions
of $P$.
\end{proof}
\begin{corollary}\label{coro:charconnected}
A finite poset $P$ is connected if and only if
$e(P)=e_C(P)$.
\end{corollary}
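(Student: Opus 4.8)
The plan is to read off the corollary immediately from the Proposition that precedes it, combined with a trivial counting observation about equivalence relations. Recall that $e_C(P)$ is by definition the number of equivalence classes of the relation $\sim$ on the set $\ell(P)$ of all linear extensions of $P$, where $f\sim g$ iff $\oplus_f=\oplus_g$, whereas $e(P)=|\ell(P)|$ is simply the cardinality of $\ell(P)$.

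The key elementary fact I would invoke is that, for any equivalence relation on a finite set $X$, the number of equivalence classes equals $|X|$ if and only if every class is a singleton, that is, if and only if the relation is the identity relation on $X$. Applying this to $\sim$ on $\ell(P)$ gives $e_C(P)=e(P)$ if and only if $\sim$ is trivial, i.e. if and only if $\oplus_f=\oplus_g$ implies $f=g$ for all linear extensions $f,g$ of $P$.

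First I would record this counting equivalence explicitly, and then invoke the preceding Proposition, whose condition (b) is precisely the statement that $\sim$ is trivial. Its stated equivalence with condition (a), the connectedness of $P$, closes the chain: $P$ is connected $\iff$ $\sim$ is trivial $\iff$ $e_C(P)=e(P)$, which is exactly the claim.

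There is no genuine obstacle here, since the mathematical content of the corollary resides entirely in the preceding Proposition. The only point requiring care is the bookkeeping distinction between $e(P)$, which counts linear extensions, and $e_C(P)$, which counts their $\sim$-classes; keeping this straight makes clear that the inequality $e_C(P)\le e(P)$ always holds and becomes an equality precisely when no two distinct linear extensions are identified by $\sim$.
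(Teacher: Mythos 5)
Your proposal is correct and matches the paper's approach exactly: the corollary is stated in the paper as an immediate consequence of the preceding Proposition (connectedness is equivalent to $\sim$ being the identity on $\lin(P)$), combined with the same elementary observation that an equivalence relation on a finite set has as many classes as elements precisely when it is trivial. Nothing further is needed.
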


\begin{theorem}[Main result]\label{thm:mainresult}
Let $P$ be a finite poset with $n$ elements, $n\geq 3$. Then $\Delta(\hatReg(P))$ is
homotopy equivalent to a wedge of $e_C(P)$ spheres of dimension $n-3$. 
\end{theorem}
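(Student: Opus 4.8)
The plan is to reduce the statement to the single combinatorial identity $s_O(P)=e_C(P)$: Theorem~\ref{thm:numberofspheres} already shows that $\Delta(\hatReg(P))$ is homotopy equivalent to a wedge of $s_O(P)$ spheres of dimension $n-3$, so it suffices to prove that the number of spheres coincides with the number of cyclic extensions. I would prove $s_O(P)=e_C(P)$ by induction on $n$, the key being to show that $e_C$ satisfies \emph{exactly the same recurrence} that Theorem~\ref{thm:numberofspheres} establishes for $s_O$, namely
$$
e_C(P)=\sum_{\pi_{a,b}\text{ order-preserving}}e_C(\pi_{a,b}),
$$
where $a$ is the fixed minimal element of $P$ and, as in Theorem~\ref{thm:numberofspheres}, each $\pi_{a,b}$ on the right is read as the quotient poset $P/\pi_{a,b}$ on $n-1$ elements.

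For the base case $n=3$ I would check $e_C(P)=s_O(P)$ directly against Table~\ref{tab:numberofspheres} for the five three-element posets: the three connected types ($V$, $\Lambda$, and the chain) give $e_C=e$ by Corollary~\ref{coro:charconnected}, hence $2,2,1$; the antichain gives $e_C=(3-1)!=2$; and the poset of Example~\ref{ex:discon} (a comparable pair together with an isolated point) gives $e_C=2$. For the inductive step $n\ge 4$ the quotients $\pi_{a,b}$ have $n-1\ge 3$ elements, so the recurrence for $e_C$, the inductive hypothesis $s_O(\pi_{a,b})=e_C(\pi_{a,b})$, and the recurrence for $s_O$ from Theorem~\ref{thm:numberofspheres} combine to give $s_O(P)=e_C(P)$, completing the induction and hence the theorem.

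The heart of the proof is the recurrence for $e_C$, which I would establish by exhibiting a bijection between the cyclic extensions of $P$ and the disjoint union, over all $b$ with $\pi_{a,b}$ order-preserving, of the cyclic extensions of $P/\pi_{a,b}$. Given a cyclic extension $\gamma$, fix any linear extension $f$ representing it and set $b:=a\oplus_f 1$, the cyclic successor of $a$; this is independent of the representative because cyclic equivalence means precisely $\oplus_f=\oplus_g$. The forward map sends $\gamma$ to the pair $(b,\bar\gamma)$, where $\bar\gamma$ is obtained by contracting the directed edge $a\to b$, that is, by merging $a$ with its successor into the single element $[a,b]$; the inverse expands $[a,b]$ back into ``$a$ immediately followed by $b$''. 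Grouping the cyclic extensions of $P$ by the value of $b$ partitions them, and within each block the contraction/expansion pair is a mutually inverse bijection, which yields the recurrence.

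The delicate points, which constitute the main obstacle, are the three well-definedness claims in this bijection. First, I must show that for the successor $b=a\oplus_f 1$ the atom $\pi_{a,b}$ is always order-preserving; taking cases on $f(a)$ does this, since if $f(a)<n-1$ then $f(b)=f(a)+1$ leaves no room for an element strictly between $a$ and $b$ (forcing $a\prec b$ or $a\parallel b$), while if $f(a)=n-1$ then $f(b)=0$ and minimality of $a$ forces $a\parallel b$. Second, I must verify that contraction yields a genuine cyclic extension of the quotient: from a representative in which $a$ and $b$ are consecutive I would construct a linear extension of $P/\pi_{a,b}$, treating the wraparound case $f(a)=n-1$, $f(b)=0$ separately, where $[a,b]$ becomes minimal in the quotient because the position $f(b)=0$ together with minimality of $a$ precludes any element below $a$ or $b$. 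Third, for the inverse I would use the description of the quotient order by $\rho$-sequences together with Propositions~\ref{prop:kshift} and~\ref{prop:compshift} to check that expansion returns a valid linear extension of $P$ and that contraction and expansion are mutually inverse. I would stress that the successor formulation is essential: one cannot instead rotate $a$ to the front of the cyclic order, since within a single cyclic-equivalence class such a rotation need not be a linear extension at all---a minimal element of $P$ need not be the least element of its connected component, as already happens for a connected ``$V$''-shaped poset---and this is precisely the subtlety the successor construction circumvents.
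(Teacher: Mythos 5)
Your proposal is correct and follows essentially the same route as the paper: reduce to showing that $e_C$ satisfies the recurrence of Theorem~\ref{thm:numberofspheres}, verify the base case on the five $3$-element posets, and establish the recurrence by the successor-contraction bijection $b=a\oplus_f 1$, which is exactly the paper's map $S_a(f)=f_a$ together with Lemmas~\ref{lemma:piab}, \ref{lemma:faislinear}, \ref{lemma:sasurjective} and~\ref{lemma:cycequiv}. The three well-definedness points you isolate correspond precisely to those four lemmas, so the argument matches the paper's in substance, differing only in that you phrase the bijection directly on cyclic classes rather than on linear extensions first.
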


Our goal is to show that the number of cyclic classes of linear extensions is the same as the number of spheres
in $\Delta(\hatReg(P))$. To do this, we prove that the recurrence for $s_O(P)$  
from Theorem \ref{thm:numberofspheres} holds for $e_C(P)$ as well. Since
it is easy to check that $s_O(P)=e_C(P)$ for any 3-element poset $P$, the quantities 
must be equal.

To prove the recurrence for $e_C(P)$, we need to link cyclic classes of linear extensions of the poset $P$
with the cyclic classes of linear extensions of the posets $\pi_{a,b}$, where $\pi_{a,b}$ is an
order-preserving partition $P$.

Let us outline the schema of the proof of Theorem \ref{thm:mainresult}.
\begin{enumerate}
\item We prove that, for a fixed minimal
element $a$, there is a mapping $S_a$ from the set of all linear extensions of $P$ to the
disjoint union of sets of all linear extensions of all $\pi_{a,b}$, where $\pi_{a,b}$ is
order-preserving (Lemmas \ref{lemma:piab} and \ref{lemma:faislinear}).
\item We prove that this mapping is surjective (Lemma \ref{lemma:sasurjective}).
\item We prove that two linear extensions $f,g$ of $P$ are cyclically equivalent
if and only if their images $S_a(f),S_a(g)$ are cyclically equivalent
(Lemma \ref{lemma:cycequiv}).
\item These facts imply that $S_a$ determines a bijection from the set
of all cyclic classes of linear extensions of $P$ to the disjoint union of sets of all cyclic
extensions of all $\pi_{a,b}$, where $\pi_{a,b}$ is an order-preserving partition of $P$.
\item This implies that the $s_O(P)$ and $e_C(P)$ satisfy the same recurrence.
Since $s_O$ and $e_C$ are equal for 3-element posets, they are equal for any poset with
at least 3 elements.
\end{enumerate}

\begin{lemma}
\label{lemma:piab}
Let $P$ be a finite poset with $n$ elements, $n\geq 2$. Let $f$ be a
linear extension of $P$, let $a$ be a
minimal element of $P$. Then
$\pi_{a,a\oplus_f 1}$ is an order-preserving partition of $P$. 
\end{lemma}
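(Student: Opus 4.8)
The plan is to show that $\pi_{a,a\oplus_f 1}$ is in fact an \emph{atom} of $\Reg(P)$, since by the characterization of atoms recorded earlier (an atom has the form $\pi_{x,y}$ with $x\prec y$ or $x\parallel y$) every atom is order-preserving. Write $b:=a\oplus_f 1=f^{-1}(f(a)\oplus 1)$, so that $f(b)=f(a)\oplus 1$ in $\Zn$ and, because $n\geq 2$, $b\neq a$. It therefore suffices to prove that $a$ and $b$ are incomparable, or that one of them covers the other.

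First I would use minimality of $a$ to reduce the cases. Since $a$ is minimal, nothing lies strictly below it, so $b<a$ is impossible; as $a\neq b$, the only surviving possibilities are $a\parallel b$ and $a<b$. The incomparable case needs no further argument, so I would assume $a<b$ and aim to show that $b$ covers $a$, i.e. $a\prec b$.

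The one genuinely delicate point is the cyclic wraparound concealed in $\oplus$: one must not assume $f(b)=f(a)+1$ as ordinary integers, for when $f(a)=n-1$ the modular sum wraps to $f(b)=0$. I would rule this out using order-preservation. Since $f$ is an order-preserving bijection and $a<b$, strictness gives $f(a)<f(b)$; as $f(a)\geq 0$, the value $f(b)=0$ is excluded, which forces $f(a)\neq n-1$ and hence $f(b)=f(a)+1$ as honest integers. Equivalently, the wraparound case $f(a)=n-1$ can occur only together with $a\parallel b$, already disposed of above.

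Finally I would close the covering argument. Suppose for contradiction that some $z$ satisfies $a<z<b$. Applying $f$ yields $f(a)<f(z)<f(b)=f(a)+1$, impossible since no integer lies strictly between consecutive integers. Hence no such $z$ exists, $b$ covers $a$, and $\pi_{a,b}$ is an atom of $\Reg(P)$, thus order-preserving. I expect the only real subtlety to be keeping the modular arithmetic straight in the wraparound case; the rest is a direct use of minimality and of the fact that $f$ is an order-preserving bijection.
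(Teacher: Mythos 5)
Your proof is correct and follows essentially the same route as the paper's: both reduce to showing $a\parallel a\oplus_f 1$ or $a\prec a\oplus_f 1$ via the atom characterization of $\Reg(P)$, rule out $a\oplus_f 1<a$ by minimality, use $f(a\oplus_f 1)=f(a)+1$ to exclude an intermediate element, and observe that the wraparound case $f(a)=n-1$ can only happen when the two elements are incomparable. The only difference is cosmetic (you case-split on comparability first, the paper on whether $f(a)=n-1$ first).
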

\begin{proof}
If $f(a)<n-1$, then $f\bigl(a\oplus_f 1\bigr)=f(a)+1$, hence
$a\not\geq a\oplus_f 1$. Therefore,
either $a\leq a\oplus_f 1$ or $a\parallel a\oplus_f 1$.
If $a\parallel a\oplus_f 1$, then $\pi_{a,a\oplus_f 1}$ is order-preserving.
If $a\leq a\oplus_f 1$ then $\pi_{a,a\oplus_f 1}$ is order-preserving iff
$a\prec a\oplus_f 1$. Suppose that $a<b<a\oplus_f 1$.
Then $f(a)<f(b)<f\bigl(a\oplus_f 1\bigr)$, which contradicts
$f\bigl(a\oplus_f 1\bigr)=f(a)+1$.

If $f(a)=n-1$
(or, equivalently, $f(a\oplus_f 1)=0$), then $a$ is maximal.
Since we assume that $a$ is minimal, this implies
that $a$ is an isolated element, hence $a$ and $a\oplus_f 1$
are incomparable. This implies that $\pi_{a,a\oplus_f 1}$ is order-preserving.
\end{proof}

For a finite poset $P$ with $n\geq 2$ elements,
a linear extension $f$ of $P$, and a minimal element $a$ of $P$, 
let us define a mapping
$f_a\colon\pi_{a,a\oplus_f 1}\to[0,n-2]_{\mathbb N}$ by the rule
$$
f_a(B)=
\begin{cases}
f(x)&\text{if $B=\{x\}$ and $f(x)<f(a)$,}\\
\min\bigl(f(a),f\bigl(a\oplus_f 1\bigr)\bigr)&\text{if $B=\{a,a\oplus_f 1\}$,}\\
f(x)-1&\text{if $B=\{x\}$ and $f(x)>f(a)+1$.}
\end{cases}
$$
\begin{example}
\begin{figure}
\centering
\begin{minipage}{10em}
\vspace*{\fill}

\includegraphics{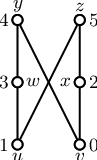}

\vspace*{\fill}
\end{minipage}
\begin{minipage}{10em}
\includegraphics{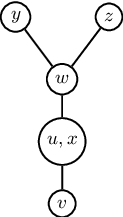}
\end{minipage}\\
\caption{Example 6}
\label{fig:shrink}
\end{figure}
Consider the 6-element poset $P$ from the left-hand side of Figure \ref{fig:shrink}. Let $g$
be a linear extension given by the number in the picture. Then the order-preserving
partition $\pi_{u,u\oplus_f 1}$ is equal to $\pi_{u,x}$, see the right hand side
of Figure \ref{fig:shrink}.

The values of the mapping $f_u\colon\pi_{u,x}\to[0,4]$ are computed as follows.
\begin{itemize}
\item Since $0=f(v)<f(u)=1$, $f_u(\{v\})=f(v)=0$.
\item $f(\{u,x\})=\min(f(u),f(x))=1$.
\item Since $3=f(w)>f(u)+1=2$, $f_u(\{w\})=f(w)-1=2$.
\item Similarly, $f(\{y\})=3$ and $f(\{z\})=4$.
\end{itemize}
\end{example}

\begin{lemma}\label{lemma:quotmap}
Let $\pi$ be an order-preserving partition of a poset $P$, let $q:P\to\pi$ be
the quotient map, given by, $q(x)=[x]_\pi$. Let $Q$ be a poset, let $g:\pi\to Q$
be a mapping of sets. Then $g$ is isotone if and only if $g\circ q$ is isotone.
\end{lemma}
\begin{proof}
Clearly, if $g$ is isotone, then $g\circ q$ is isotone.

Suppose that $g\circ q$ is isotone. Let $[x]_\pi,[y]_\pi\in\pi$ be such
that $[x]_\pi\leq[y]_\pi$.
By definition, there is a $\rho_\pi$-sequence $x=x_0,\dots,x_n=y$.
If $(x_{i-1},x_i)\in\rho_\pi$, then $q(x_{i-1})=q(x_{i})$, so $(g\circ q)(x_{i-1})=(g\circ q)(x_i)$.
If $x_{i-1}\leq x_i$ then $(g\circ q)(x_{i-1})\leq(g\circ q)(x_i)$, because $(g\circ q)$ is isotone.
Thus, 
$$
g([x]_\pi)=g([x_0]_\pi)=(g\circ q)(x_0)\leq (g\circ q)(x_n)=g([x_n]_\pi)=g([y]_\pi)
$$
and we see that $g$ is isotone.
\end{proof}

\begin{lemma}
\label{lemma:faislinear}
Let $P$ be a finite poset with $n\geq 2$ elements, let $f$ be a
linear extension of $P$, let $a$ be a minimal element of $P$.
Then $f_a$ is a linear extension of the poset $(\pi_{a,a\oplus_f 1},\leq)$.
\end{lemma}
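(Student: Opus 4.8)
The plan is to verify the two defining properties of a linear extension separately: that $f_a$ is a bijection onto $[0,n-2]_{\mathbb N}$, and that it is order-preserving for the quotient order $\leq_\rho$ on $\pi_{a,b}$, where I abbreviate $b:=a\oplus_f 1$ and write $q\colon P\to\pi_{a,b}$ for the quotient map. Bijectivity is a direct bookkeeping argument. Since $\pi_{a,b}$ has exactly $n-1$ blocks and the codomain has $n-1$ elements, it suffices to check surjectivity. When $f(a)<n-1$ we have $f(b)=f(a)+1$, so the singletons with $f$-value below $f(a)$ cover $\{0,\dots,f(a)-1\}$, the block $\{a,b\}$ is sent to $\min(f(a),f(b))=f(a)$, and the singletons with $f$-value exceeding $f(a)+1$ are shifted down by one onto $\{f(a)+1,\dots,n-2\}$; together these exhaust $[0,n-2]_{\mathbb N}$. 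When $f(a)=n-1$ the block $\{a,b\}$ is sent to $\min(f(a),f(b))=0$ while every remaining singleton $\{x\}$ has $f(x)\in\{1,\dots,n-2\}$ and is fixed, again exhausting the codomain; here one reads $f(a)+1$ as ordinary integer addition, so no singleton falls into the third branch of the definition.

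For order-preservation I would reduce the claim about the quotient to a statement about $P$ itself. Set $g:=f_a\circ q\colon P\to[0,n-2]_{\mathbb N}$; by construction $g$ is constant on the $\rho$-classes, where $\rho=\rho_{\pi_{a,b}}$. I claim that $g$ is order-preserving on $P$, that is, $x\leq y$ implies $g(x)\leq g(y)$. Granting this, order-preservation of $f_a$ follows from the description of $\leq_\rho$ by $\rho$-sequences recalled earlier: if $\rho[x]\leq_\rho\rho[y]$, choose a $\rho$-sequence $x=x_0,\dots,x_m=y$; each $\rho$-step leaves $g$ unchanged and each strict step $x_{i-1}<x_i$ weakly increases $g$, so $g(x)\leq g(y)$, which is precisely $f_a(\rho[x])\leq f_a(\rho[y])$.

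It thus remains to prove that $g$ is order-preserving on $P$, and this is where the two cases of the definition genuinely differ. If $f(a)<n-1$, then $g=h\circ f$, where $h\colon[0,n-1]_{\mathbb N}\to[0,n-2]_{\mathbb N}$ is the identity on $[0,f(a)]_{\mathbb N}$ and subtracts $1$ on $[f(a)+1,n-1]_{\mathbb N}$; since $h$ is monotone and $f$ is order-preserving, so is $g$. If $f(a)=n-1$, this $h$ is no longer monotone (it would have to send the top value back to $0$), so instead I use that in this case $a$ is isolated, as established in the proof of Lemma~\ref{lemma:piab}. Then $g$ agrees with $f$ on $P\setminus\{a\}$ — in particular $g(b)=0=f(b)$ — and $a$ participates in no comparability of $P$, so every comparable pair of $P$ avoids $a$ and is handled directly by $f$ being order-preserving.

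I expect the main obstacle to be exactly this wrap-around case $f(a)=n-1$, where the obvious ``collapse'' map $h$ fails to be monotone and one must instead exploit that the minimal element $a$ is forced to be isolated; coupled with it is the need to transfer order-preservation from $P$ to the quotient through the $\rho$-sequence characterisation of $\leq_\rho$ rather than attempting to read comparabilities in $\pi_{a,b}$ directly. Everything else — the counting behind bijectivity and the monotonicity in the case $f(a)<n-1$ — is routine.
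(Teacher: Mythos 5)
Your proof is correct, but it organizes the argument differently from the paper. The paper verifies order-preservation by a direct case analysis on comparable pairs of blocks $B_1\leq B_2$ in the quotient (both singletons; singleton below $\{a,a\oplus_f 1\}$; $\{a,a\oplus_f 1\}$ below a singleton), reading off comparabilities in $\pi_{a,a\oplus_f 1}$ from comparabilities in $P$ in each case. You instead lift everything back to $P$: you show that the composite $g=f_a\circ q$ is order-preserving on $P$ (via the factorization $g=h\circ f$ through a monotone collapse map $h$ when $f(a)<n-1$, and via the isolatedness of $a$ when $f(a)=n-1$), and then transfer this to the quotient order in one stroke using the $\rho$-sequence characterization of $\leq_\rho$. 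Your route buys a cleaner treatment of the quotient order --- the paper's Case~2 and Case~3 implicitly need the fact that $B_1\leq B_2$ forces an actual comparability in $P$ between representatives, which your $\rho$-sequence argument handles uniformly and without inspection of block types --- at the cost of introducing the auxiliary map $h$. Both proofs hinge on the same essential dichotomy $f(a)<n-1$ versus $f(a)=n-1$ and on the observation (from Lemma~\ref{lemma:piab}) that in the wrap-around case $a$ is isolated; your bijectivity count is also more explicit than the paper's, which dismisses it as obvious. No gaps.
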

\begin{proof}
It is obvious that $f_a$ is a bijection.

Let $\delta:[0,n-1]\to[0,n-2]$ be the degeneracy map corresponding to the
contraction of the edge $\{f(a),f(a)\oplus 1\}$ in the directed cycle $[0,n-1]$.
Let $q$ be the quotient map that maps every element of $P$ to its $\pi_{a,a\oplus_f 1}$-class.

Then
$f_a$ is the unique map that makes
$$
\xymatrix{
P\ar[r]^-f \ar[d]_{q}&[0,n-1]\ar[d]^{\delta}\\
\pi_a\ar[r]^-{f_a}&[0,n-2]
}
$$
commute. 

Since $q$ is a quotient map, $f_a$ is isotone if and only if $f_a\circ q=\delta\circ f$ is
(see Lemma \ref{lemma:quotmap}).
If $f(a)<n-1$, then $\delta$ is an isotone map, hence $\delta\circ f$ is isotone.
If $f(a)=n-1$, then $\delta$ is not isotone. However, in this case $a$ is an
isolated element of $P$ and it is easy to check that $\delta\circ f$ is isotone.
\end{proof}

Let $a$ be a minimal element of a finite poset $P$.
By the previous two lemmas, there is a mapping
$$
S_a\colon\mathcal \lin(P)\to\bigcup\{\lin(\pi_{a,b}) : \text{$\pi_{a,b}$ is order-preserving}\}
$$
given by $S_a(f):=f_a$.
In fact, this mapping is surjective, as shown by the following lemma.
\begin{lemma}
\label{lemma:sasurjective}
Let $P$ be a finite poset with $n\geq 2$ elements.
Let $a$ be a minimal element of $P$.
Let $b\in P$ be such that $\pi_{a,b}$ is an order-preserving partition. For every linear
extension $g$ of $\pi_{a,b}$ there is a linear extension $f$ of $P$ such that
$a\oplus_f 1=b$ and $f_a=g$.
\end{lemma}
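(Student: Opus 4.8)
The plan is to recover $f$ from $g$ by \emph{reinserting} the element $b$ immediately after $a$ in the order dictated by $g$. Writing $m=g(\{a,b\})$, I would define $f\colon P\to[0,n-1]_{\mathbb N}$ by
\[
f(x)=
\begin{cases}
g(\{x\}) & \text{if } x\notin\{a,b\}\text{ and }g(\{x\})<m,\\
m & \text{if } x=a,\\
m+1 & \text{if } x=b,\\
g(\{x\})+1 & \text{if } x\notin\{a,b\}\text{ and }g(\{x\})>m.
\end{cases}
\]
Since $g$ is a bijection onto $[0,n-2]_{\mathbb N}$ with $g(\{a,b\})=m$, every singleton block $\{x\}$ satisfies $g(\{x\})\ne m$, so the first and last clauses are exhaustive for $x\notin\{a,b\}$; moreover the singleton values below $m$ fill $[0,m-1]_{\mathbb N}$ and, after the unit shift, those above $m$ fill $[m+2,n-1]_{\mathbb N}$. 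Hence $f$ is a bijection onto $[0,n-1]_{\mathbb N}$.

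Two of the three required properties are then immediate. As $m=g(\{a,b\})\le n-2<n-1$ we have $f(a)=m<n-1$ and $f(b)=m+1=f(a)+1$, so $a\oplus_f 1=f^{-1}(m+1)=b$, as wanted. Comparing $f$ clause-by-clause against the defining cases of $f_a$ then yields $f_a=g$: the block $\{a,b\}=\{a,a\oplus_f 1\}$ is sent to $\min(f(a),f(b))=m=g(\{a,b\})$; a singleton $\{x\}$ has $f(x)<f(a)=m$ exactly when $g(\{x\})<m$, in which case $f_a(\{x\})=f(x)=g(\{x\})$; and it has $f(x)>f(a)+1=m+1$ exactly when $g(\{x\})>m$, in which case $f_a(\{x\})=f(x)-1=g(\{x\})$.

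The substantive step is to verify that $f$ is order-preserving, hence a linear extension. I would take $x,y\in P$ with $x<y$ and argue $f(x)<f(y)$. Because $x,y$ is a $\rho$-sequence for $\rho=\rho_{\pi_{a,b}}$, it follows that $[x]\le[y]$ in the quotient poset $(\pi_{a,b},\le)$. If $[x]=[y]$, then $x$ and $y$ lie in the only non-singleton block, so $\{x,y\}=\{a,b\}$; minimality of $a$ rules out $y=a$ (which would force $x=b<a$), leaving $x=a$, $y=b$ and $f(x)=m<m+1=f(y)$. If $[x]\ne[y]$, then $[x]<[y]$, and since $g$ is an order-preserving bijection, $g([x])<g([y])$; a short case distinction --- according to whether each of $x,y$ is one of $a,b$ or a genuine singleton --- shows that the monotone insertion sending $g$-values to $f$-values carries this strict inequality to $f(x)<f(y)$, using only $g([x])<g([y])$ together with $f(a),f(b)\in\{m,m+1\}$. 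I do not expect a real obstacle here; the only care needed is the bookkeeping in this last case analysis and the single appeal to the minimality of $a$ to settle the situation where $x,y$ fall in the merged block.
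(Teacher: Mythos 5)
Your construction of $f$ is exactly the one used in the paper, and your verification strategy (bijectivity, the identities $a\oplus_f 1=b$ and $f_a=g$, and a case analysis for order-preservation according to whether $x,y$ meet $\{a,b\}$, with minimality of $a$ settling the merged-block case) matches the paper's proof in substance; the final case distinction you leave as a sketch does go through as you expect. This is essentially the same proof.
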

\begin{proof}
The mapping $f\colon P\to[0,n-1]$ is given as follows:
$$
f(x)=
\begin{cases}
g(\{x\})&\text{if $g(\{x\})<g(\{a,b\})$,}\\
g(\{a,b\})&\text{if $x=a$,}\\
g(\{a,b\})+1&\text{if $x=b$,}\\
g(\{x\})+1&\text{if $g(\{x\})>g(\{a,b\})$.}
\end{cases}
$$
Obviously, $f$ is a bijection. We shall prove that $f$ is order-preserving.
Let $x,y\in P$ be such that $x<y$. Denote the class of $x$ in $\pi_{a,b}$ by $[x]$.
If $x=a$ and $y=b$, then $f(x)<f(y)$. Otherwise, $g([x])<g([y])$ and 
$$
f(x)\leq g([x])+1\leq g([y])\leq f(y).
$$
Thus, $f$ is a linear extension of $P$.
Clearly, 
$$
a\oplus_f 1=f^{-1}(f(a)\oplus 1)=f^{-1}(g(\{a,b\})+1)=f^{-1}(f(b))=b.
$$
The proof that $f_a=g$ is trivial and is thus omitted. 
\end{proof}
\begin{lemma}
\label{lemma:cycequiv}
Let $P$ be a finite poset with $n\geq 2$ elements.
Let $f,g$ be linear extensions of $P$,
let $a$ be a minimal element of $P$.
Then $f\sim g$ if and only if $f_a\sim g_a$.
\end{lemma}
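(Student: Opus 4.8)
The plan is to reduce everything to the successor functions of the cyclic actions. For a linear extension $f$ of $P$, write $s_f(x):=x\oplus_f 1$ for the action of the generator $1\in\Zn$; as recorded in the discussion of $C(f,P)$, the map $s_f$ determines the whole action, since $x\oplus_f k=s_f^{\,k}(x)$ for every $k$. Consequently $\oplus_f=\oplus_g$ holds if and only if $s_f=s_g$, and likewise $\oplus_{f_a}=\oplus_{g_a}$ holds if and only if $s_{f_a}=s_{g_a}$, where the latter successor functions live on the quotients. Thus the whole statement becomes: $s_f=s_g$ if and only if $s_{f_a}=s_{g_a}$.

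The geometric content is that $C(f_a,\pi_{a,b})$ is obtained from $C(f,P)$ by contracting the single edge $a\to b$, where $b:=a\oplus_f 1$. First I would record that $b\neq a$ (since $n\geq 2$) and that $a$ is the immediate predecessor of $b$ in $C(f,P)$. Writing $[\,\cdot\,]$ for the quotient map $P\to\pi_{a,b}$, I claim that $s_{f_a}([a])=[s_f(b)]$ and that $s_{f_a}([x])=[s_f(x)]$ for every $x\neq a$. This is a direct computation from the piecewise definition of $f_a$: the values strictly below $f(a)$ are untouched, the block $\{a,b\}$ receives $\min(f(a),f(b))$, and the values above $f(a)+1$ are shifted down by one, so that $f_a$ lists the blocks of $\pi_{a,b}$ in exactly the cyclic order obtained from the sequence $f^{-1}(0),\dots,f^{-1}(n-1)$ by merging the consecutive entries $a,b$ into one. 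The only place where care is needed is the wrap-around case $f(a)=n-1$ (forcing $a$ isolated and $b=f^{-1}(0)$), in which $\{a,b\}$ receives the value $0$; the contraction identity still holds because the edge $a\to b$ is exactly the one crossing the $n-1\rightsquigarrow 0$ seam of the cycle.

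Granting the contraction identity, both directions follow. For the forward direction, $\oplus_f=\oplus_g$ gives $s_f=s_g$, hence in particular $a\oplus_f 1=a\oplus_g 1$, so if we set $b_f:=a\oplus_f 1$ and $b_g:=a\oplus_g 1$ we get $b_f=b_g=:b$ and $f_a,g_a$ are linear extensions of the \emph{same} poset $\pi_{a,b}$; applying the contraction identity to both then yields $s_{f_a}=s_{g_a}$, i.e.\ $\oplus_{f_a}=\oplus_{g_a}$. For the reverse direction, equality of the functions $\oplus_{f_a}$ and $\oplus_{g_a}$ forces their domains to coincide, so $\pi_{a,b_f}=\pi_{a,b_g}$ and therefore $b_f=b_g=:b$; now $s_{f_a}=s_{g_a}$, and since the contraction is reversible once the distinguished vertex $a$ is named---one simply re-expands the block $\{a,b\}$ into the edge $a\to b$, which is unambiguous because $a$ is given and $b$ is the remaining element of the block---we recover $s_f=s_g$, i.e.\ $\oplus_f=\oplus_g$.

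The routine part is the piecewise verification of the contraction identity together with the $f(a)=n-1$ subcase. The conceptual heart, which makes the reverse implication work, is that contracting the edge at the \emph{named} minimal element $a$ is an invertible operation: the quotient remembers which block is $\{a,b\}$ and which element of it is $a$, so $C(f,P)$ is reconstructed uniquely from $C(f_a,\pi_{a,b})$. An alternative, more computational route would bypass the successor functions entirely and argue through Proposition~\ref{prop:kshift}, exhibiting the shift $k'$ relating $f_a,g_a$ directly from the shift $k$ relating $f$ and $g$; I expect this to be correct but considerably more tedious, because of the interaction of the ``shift down by one'' in the definition of $f_a$ with the modular arithmetic.
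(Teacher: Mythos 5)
Your proof is correct and takes essentially the same route as the paper: both arguments work with the cycle digraph $C(f,P)$, identify $C(f_a,\pi_{a,a\oplus_f 1})$ as the contraction of the edge $(a,a\oplus_f 1)$, and recover $C(f,P)$ in the reverse direction by the unique re-expansion of the block $\{a,a\oplus_f 1\}$. Your explanation of why the re-expansion is unambiguous (the quotient remembers which element of the block is the named vertex $a$) is, if anything, slightly more explicit than the paper's.
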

\begin{proof}
Suppose that $\oplus_f=\oplus_g$.
This implies that $C(f,P)=C(g,P)$.
The mapping $f\mapsto f_a$, $g\mapsto g_a$ corresponds
to the contraction of the same edge 
$(a,a\oplus_f 1)=(a,a\oplus_g 1)$. Thus, 
$C(f_a,\pi_{a,a\oplus_f 1})=C(f_b,\pi_{a,a\oplus_g 1})$
and this implies that $\oplus_{f_a}=\oplus_{g_a}$.

Suppose that $\oplus_{f_a}=\oplus_{g_a}$.
The domains of equal maps must be the same, so
$\pi_{a,a\oplus_f 1}=\pi_{a,a\oplus_g 1}$.
Hence, $C(f_a,\pi_{a,a\oplus_f 1})=C(g_a,\pi_{a,a\oplus_g 1})$.
The digraph $C(f,P)$ arises from $C(f_a,\pi_{a,a\oplus_f 1})$
by an expansion of the vertex $\{a,a\oplus_f 1\}$.
Principally, there are two possible orientations of the new edge
between $a,a\oplus_f 1$.
However, only one of them gives us an oriented cycle.
Therefore, $C(f,P)$ is determined by 
$C(f_a,\pi_{a,a\oplus_f 1})$.
Similarly, $C(g,P)$ is determined by 
$C(g_a,\pi_{a,a\oplus_g 1})$.
\end{proof}

\begin{proof}[Proof of the main result]
It is easy to check that for any 3-element poset $P$,
$e_C(P)=s_O(P)$ (see Table \ref{tab:numberofspheres}). 

\begin{table}
\begin{center}
\begin{tabular}{|c|c|c|c|c|}
\hline
\raisebox{1em}{\includegraphics{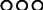}} &
\includegraphics{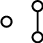} &
\includegraphics{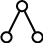} &
\includegraphics{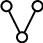} &
\includegraphics{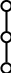} \\ 
\hline
2 & 2 & 2 & 2 & 1\\
\hline
\end{tabular}
\end{center}
\caption{$e_C(P)=s_O(P)$ for 3-element posets}
\label{tab:numberofspheres}
\end{table}

Let $a$ be a minimal element of a finite poset $P$,
$|P|>3$. By Lemmas \ref{lemma:sasurjective}~and~\ref{lemma:cycequiv},
$S_a$ determines a bijection
$$
S_a^\sim\colon (\lin(P)/\sim)\to
\bigcup\{\lin(\pi_{a,b})/\sim : \text{$\pi_{a,b}$ is order-preserving}\}
$$
given by $[f]_\sim\mapsto[f_a]_\sim$.
Since the union of the right-hand side is clearly disjoint,
this gives us the following recurrence
$$
e_C(P)=\sum_\text{$\pi_{a,b}$ is order-preserving} e_C(\pi_{a,b}).
$$

By induction and Theorem \ref{thm:numberofspheres}, $s_O(P)=e_C(P)$.
\end{proof}
\begin{corollary}\label{coro:last}
Let $P$ be a finite connected poset with $n$ elements, $n\geq 3$. Then $\Delta(\hatReg(P))$ is
homotopy equivalent to a wedge of $e(P)$ spheres of dimension $n-3$. 
\end{corollary}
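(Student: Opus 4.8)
The plan is to derive Corollary~\ref{coro:last} as an immediate specialization of Theorem~\ref{thm:mainresult} to the connected case. By Theorem~\ref{thm:mainresult}, the complex $\Delta(\hatReg(P))$ is homotopy equivalent to a wedge of $e_C(P)$ spheres of dimension $n-3$, and this holds for \emph{every} finite poset with $n\geq 3$ elements, connected or not. So the only thing left to establish is that for a \emph{connected} poset the number of cyclic extensions coincides with the number of linear extensions, that is $e_C(P)=e(P)$.

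First I would invoke Corollary~\ref{coro:charconnected}, which asserts precisely that a finite poset $P$ is connected if and only if $e(P)=e_C(P)$. Since $P$ is assumed connected, the forward direction of that equivalence gives $e_C(P)=e(P)$ directly. Substituting this equality into the conclusion of Theorem~\ref{thm:mainresult} replaces $e_C(P)$ by $e(P)$ in the count of spheres, which is exactly the claim of the corollary. There is no induction, no matching construction, and no topology to redo here: all the genuine work was already carried out in the main theorem (the homotopy type) and in the section on cyclic extensions (the combinatorial identification $e_C=e$ in the connected case).

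The proof is therefore essentially a one-line syllogism, and I do not anticipate any real obstacle. The only point requiring a moment's care is the hypothesis on $n$: both Theorem~\ref{thm:mainresult} and Corollary~\ref{coro:charconnected} must be applicable, so I would note that the standing assumption $n\geq 3$ covers the use of the main theorem, while Corollary~\ref{coro:charconnected} is stated without restriction on $n$ and so applies a fortiori. Thus the assembled statement reads: $\Delta(\hatReg(P))$ is homotopy equivalent to a wedge of $e_C(P)=e(P)$ spheres of dimension $n-3$, as required.

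\begin{proof}
By Theorem~\ref{thm:mainresult}, since $n\geq 3$, the complex $\Delta(\hatReg(P))$ is homotopy equivalent to a wedge of $e_C(P)$ spheres of dimension $n-3$. As $P$ is connected, Corollary~\ref{coro:charconnected} yields $e_C(P)=e(P)$. Substituting this equality gives that $\Delta(\hatReg(P))$ is homotopy equivalent to a wedge of $e(P)$ spheres of dimension $n-3$.
\end{proof}
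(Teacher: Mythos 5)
Your proof is correct and matches the paper exactly: the paper's own proof of this corollary is the one-line citation ``By Theorem~\ref{thm:mainresult} and Corollary~\ref{coro:charconnected}.'' You have simply spelled out the same two-step syllogism in full.
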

\begin{proof}
This follows by Theorem \ref{thm:mainresult} and Corollary \ref{coro:charconnected}. 
\end{proof}

\noindent {\bf Acknowledgements}
{\em We are indebted to both anonymous referees for valuable comments and suggestions, especially 
to one of the referees for
the proofs of Lemmas \ref{lemma:sasurjective} and \ref{lemma:faislinear}; our original proofs 
were technically correct, but unnecessarily long and boring.
}

{\em
This research is supported by grants VEGA G-1/0297/11,G-2/0059/12 of M\v S SR,
Slovakia and by the Slovak Research and Development Agency under the contracts
APVV-0073-10, APVV-0178-11.
}


\begin{thebibliography}{10}

\bibitem{Bjo:TM}
A.~Bj{\"o}rner.
\newblock Topological methods.
\newblock In R.L. Graham, M.~Gr{\"o}tschel, and L.~Lov{\'a}sz, editors, {\em
  Handbook of combinatorics}, volume~2, pages 1819--1872. North-Holland, 1995.

\bibitem{CzeLen:OCoOAaQD}
G.~Cz{\'e}dli and A.~Lenkehegyi.
\newblock On classes of ordered algebras and quasiorder distributivity.
\newblock {\em Acta Sci. Math.}, 46:41--54, 1983.

\bibitem{CzeLen:OCnDoOA}
G.~Cz{\'e}dli and A.~Lenkehegyi.
\newblock On congruence {$n$}-distributivity of ordered algebras.
\newblock {\em Acta Mathematica Hungarica}, 41(1-2):17--26, 1983.

\bibitem{Fol:ThGoaL}
J.~Folkman.
\newblock The homology groups of a lattice.
\newblock {\em J. Math. Mech.}, 15:631--–636, 1966.

\bibitem{For:AUGtDMT}
R.~Forman.
\newblock Users guide to discrete {M}orse theory.
\newblock {\em S{\` e}m. Lothar. Combin.}, B48c:1--35, 2002.

\bibitem{Gra:GLT}
G.~Gr{\" a}tzer.
\newblock {\em General Lattice Theory}.
\newblock Birkh{\" a}user, second edition, 1998.

\bibitem{Koz:CAT}
D.~Kozlov.
\newblock {\em Combinatorial Algebraic Topology}.
\newblock Springer, Berlin and Heidelberg, 2008.

\bibitem{May:CCiAT}
J.P. May.
\newblock {\em Concise Course in Algebraic Topology}.
\newblock The University of Chicago Press, Chicago, 1999.

\bibitem{Mun:EoAT}
J.R. Munkres.
\newblock {\em Elements Of Algebraic Topology}.
\newblock Advanced book program. Westview Press, 1996.

\bibitem{KorRadSzi:CaIMoPOS}
S.~Radeleczki P.~K{\" o}rtesi and Sz. Szil{\'a}gyi.
\newblock Congruences and isotone maps on partially ordered sets.
\newblock {\em Math. Pannon.}, 16:39--55, 2005.

\bibitem{Sta:EC}
R.P. Stanley.
\newblock {\em Enumerative Combinatorics}, volume~1.
\newblock Wadsworth and Brooks/Cole, Monterey, 1986.

\bibitem{Stu:VvKIA}
T.~Sturm.
\newblock Verb{\" a}nde von {K}ernern isotoner {A}bbildungen.
\newblock {\em Czechoslovak Mathematical Journal}, 22(1):126--144, 1971.

\bibitem{Stu:ECvK}
T.~Sturm.
\newblock Einige {C}haracterisationen von {K}etten.
\newblock {\em Czechoslovak Mathematical Journal}, 23(3):375--391, 1973.

\bibitem{Stu:OLoKoIM}
T.~Sturm.
\newblock On the lattices of kernels of isotonic mappings.
\newblock {\em Czechoslovak Mathematical Journal}, 27(2):258--295, 1977.

\bibitem{Tro:CaPOSDT}
W.T. Trotter.
\newblock {\em Combinatorics Algebra and Partially Ordered Sets, Dimension
  Theory}.
\newblock The Johns Hopkins University Press, Baltimore and London, 1992.

\end{thebibliography}
\end{document}